\newtheorem{theorem}{Theorem} 
\newtheorem{theorem*}{Theorem}
\newtheorem{proposition}[theorem]{Proposition}
\newtheorem{claim}[theorem]{Claim}
\renewcommand{\epsilon}{\varepsilon}
\theoremstyle{definition}
\theoremstyle{remark}
\newcommand{\URL}{\url{http://orion.math.iastate.edu/lidicky/pub/c5/}}
\newcounter{arxiv}
\begin{document}
\setcounter{arxiv}{1}

\title{Maximum density of an induced $5$-cycle is achieved by an iterated blow-up of  a $5$-cycle}
\author{
J\'{o}zsef Balogh\thanks{ Department of Mathematics, University of Illinois, Urbana, IL 61801, USA and Bolyai Institute, University of Szeged, Szeged, Hungary E-mail: {\tt jobal@math.uiuc.edu}. Research is partially supported by Simons Fellowship, NSF CAREER Grant DMS-0745185, Arnold O. Beckman Research Award (UIUC Campus Research Board 13039) and Marie Curie FP7-PEOPLE-2012-IIF 327763.} \and
Ping Hu\thanks{Department of Mathematics, University of Illinois, Urbana, IL 61801, USA and University of Warwick, UK E-mail: {\tt pinghu1@math.uiuc.edu}.}  \and
Bernard Lidick\'{y}\thanks{Department of Mathematics, Iowa State University, Ames, IA, E-mail: {\tt lidicky@iastate.edu}.}
\and
Florian Pfender\thanks{Department of Mathematical and Statistical Sciences, University of Colorado Denver, E-mail: {\tt 
Florian.Pfender@ucdenver.edu}. Research is partially supported by a collaboration grant from the Simons Foundation.} 
}

\newcommand{\Xmin}{0.19816}
\newcommand{\Xmax}{0.20184}
\newcommand{\Xzeromax}{0.0026}
\newcommand{\funky}{0.000011}
\newcommand{\funkyDeg}{0.0132}
\newcommand{\XzerofunkyDeg}{0.081}
\maketitle

\begin{abstract}
Let $C(n)$ denote the maximum number of induced copies of $5$-cycles in graphs
on $n$ vertices.
For $n$ large enough, we show that
$C(n)=a\cdot b\cdot c \cdot d \cdot e + C(a)+C(b)+C(c)+C(d)+C(e)$,
where $a+b+c+d+e = n$ and $a,b,c,d,e$ are as equal as possible.

Moreover, if $n$ is a power of 5, we show that the unique graph  on $n$ vertices
maximizing the number of induced $5$-cycles is an iterated blow-up of a 5-cycle.

The proof uses flag algebra computations and stability methods. % and other computer programs.
\end{abstract}

\section{Introduction}

In 1975, Pippinger and Golumbic~\cite{PippengerGolumbic1975} conjectured that
in graphs the maximum induced density of a $k$-cycle is $k!/(k^k - k)$ when $k \geq 5$.
In this paper we solve their conjecture for $k=5$. In addition, we also show that
the extremal limit object is unique.
The problem of maximizing the induced density of $C_5$ is also posted
on \url{http://flagmatic.org} as one of the problems where the plain flag algebra
method was applied but failed to provide an exact result.
It was also mentioned by Razborov~\cite{RazborovIMA}. 

Problems of maximizing the number of induced copies of a fixed small graph $H$
have attracted a lot of attention recently~\cite{EvanLinial2013,Hirst2014,flagmatic}.
For a list of other results on this so called inducibility of small graphs
of order up to $5$, see the work of Even-Zohar and Linial~\cite{EvanLinial2013}.

{\ifnum\thearxiv=1
In this paper, we use a method that we originally developed for maximizing the
number of rainbow triangles in $3$-edge-colored complete graphs~\cite{BaloghHLPVY:2014+}.
However, the application of the method to the $C_5$ problem is less  technical,
and therefore this paper is a more accessible exposition of this new method.}{\fi}

Denote the $(k-1)$-times iterated blow-up of $C_5$ by $C_5^{k\times}$, 
see Figure~\ref{fig-construction}.
Let $\mathcal{G}_n$ be the set of all graphs on $n$ vertices, and
denote by $C(G)$ the number of induced copies of $C_5$ in a graph $G$. Define
\[
C(n) = \max_{G\in \mathcal{G}_n} C(G).
\]
We say a graph $G\in \mathcal{G}_n$ is \emph{extremal} if $C(G) = C(n)$. Notice that, since $C_5$ is a self-complementary graph, $G$ is extremal
if and only if its complement is extremal. If $n$ is a power of $5$, we can exactly determine the unique extremal graph and thus $C(n)$.

\begin{theorem}\label{thmC55k}
For $k\ge 1$, the unique extremal graph in $\mathcal{G}_{5^k}$
is $C_5^{k\times}$.
\end{theorem}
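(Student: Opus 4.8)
The plan is to combine a flag algebra computation with the stability method, and then to upgrade the resulting approximate structure to the exact one by induction on $k$. It helps to frame the target arithmetically: if $N_k$ denotes the number of induced copies of $C_5$ in $C_5^{k\times}$ (a graph on $5^k$ vertices), then a five-element vertex set induces a copy exactly when it either meets each of the five top-level blow-up parts in one vertex, or lies inside a single part and induces a copy there; every other distribution forces two of the chosen vertices to have the same neighbourhood among the remaining three, which $C_5$ does not possess. Hence $N_k=5^{5(k-1)}+5\,N_{k-1}$ with $N_1=1$, so the induced $C_5$-density of $C_5^{k\times}$ tends to $5!/(5^5-5)=1/26$; the same recursion shows that the iterated $C_5$-blow-up graphon $W^\ast$ (split $[0,1]$ into five intervals, put the complete-bipartite pattern of $C_5$ between them, recurse inside each interval) has induced $C_5$-density exactly $1/26$. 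So it suffices to prove a matching upper bound at every scale, together with uniqueness.

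\emph{Phase 1: density and stability.} I would run the flag algebra semidefinite program for maximizing the induced $C_5$-density on small flags, using the self-complementarity of $C_5$ to halve the computation. As the introduction notes, the plain bound is not expected to reach $1/26$, so the extra ingredient (following \cite{BaloghHLPVY:2014+}) is a refined certificate that does attain the tight value --- roughly, one lets the argument invoke the inductively weaker bound $C(m)\le\tfrac1{26}\binom m5+O(m^4)$ for $m<n$, or, relatedly, bounds the $C_5$-density through a vertex in terms of densities in its link --- arranged so that the SDP slack also controls the edit distance from $W^\ast$. Standard stability/removal arguments then yield: for every $\delta>0$ there are $\epsilon>0$ and $n_0$ such that any $G$ on $n\ge n_0$ vertices with induced $C_5$-density at least $1/26-\epsilon$ has a partition $V(G)=V_1\cup\dots\cup V_5$ (indices mod $5$) with every $|V_i|$ within $\delta n$ of $n/5$, with at most $\delta n^2$ ``wrong'' pairs (a non-edge inside $V_i\cup V_{i+1}$, or an edge between non-consecutive parts), and with each $G[V_i]$ again of induced $C_5$-density at least $1/26-\epsilon$.

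\emph{Phase 2: exact structure, by induction on $k$.} Powers of $5$ below the threshold $n_0$ are handled separately (directly or by computer; $k=1$ is trivial, since the only five-vertex graph with an induced $C_5$ is $C_5$). For the inductive step, let $G$ be extremal on $n=5^k$ vertices; by Phase 1 its induced $C_5$-density tends to $1/26$, so Phase 1 supplies the approximate partition $V_1,\dots,V_5$. First a single-vertex improvement: since the rest of $G$ is close to $W^\ast$, replacing $N_G(v)$ by the exact blow-up pattern of the class best fitting $v$ would put $v$ in $(1+o(1))\prod_{j\ne i}|V_j|$ induced copies, with each wrong adjacency at $v$ destroying a $(1-o(1))$ fraction of a fixed $\Theta(n^3)$-sized family of copies; extremality thus forces every vertex to have only $o(n)$ wrong adjacencies. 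Then a single-pair flip: for any remaining wrong pair $\{v,w\}$ with $v\in V_i$, $w\in V_{i+1}$ and $v\not\sim w$, changing $vw$ into an edge creates $(1+o(1))\prod_{j\ne i,i+1}|V_j|>0$ new induced copies and destroys essentially none, again contradicting extremality. Hence $G$ is exactly a blow-up of $C_5$ over $V_1,\dots,V_5$ with each $G[V_i]$ extremal on $|V_i|$ vertices, so $C(G)=\prod_i|V_i|+\sum_i C(|V_i|)$; a short computation using $C(m)=\tfrac1{26}\binom m5+O(m^4)$ shows this is uniquely maximized when all $|V_i|=5^{k-1}$. The induction hypothesis then gives $G[V_i]\cong C_5^{(k-1)\times}$ for each $i$, so $G\cong C_5^{k\times}$.

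\emph{Main obstacle.} Essentially all the difficulty is in Phase 1: the plain flag algebra bound provably misses $1/26$, so the proof hinges on the refined certificate --- one that at once establishes the sharp asymptotic bound and yields a stability statement quantitative enough to seed Phase 2. After that, Phase 2 is only technically delicate: the improvement and flip estimates require $\delta$ (hence $\epsilon$) small relative to the $\Theta(n^3)$ and $\Theta(n^4)$ margins, which in practice means cleaning in stages and checking that corrections made in different parts do not interfere.
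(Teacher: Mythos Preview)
Your Phase~2 cleanup (vertex-by-vertex improvement, then pair flips, then balancing) matches the paper's Claims~\ref{nofunky}--\ref{clbalance} in spirit, and the induction at the end is natural. But there are two genuine gaps.

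\textbf{Phase 1.} You correctly flag this as the obstacle, but your proposed fix --- a ``refined certificate that does attain the tight value'' --- is not what the paper does, and it is not clear such a certificate exists. The paper never produces a flag-algebra certificate reaching $1/26$; its plain bound stays at $0.03846157$. Instead, the key idea is to use flag algebras to bound a \emph{different} quantity from below, namely $4\cdot C22111 - 11.94\cdot C31111$ (Proposition~\ref{prop:flag}), where $C22111$ and $C31111$ are the densities of the seven-vertex $C_5$-blow-ups. An averaging argument then singles out one induced copy $Z$ of $C_5$ in $G$ for which $C22111(Z)-3.98\cdot C31111(Z)$ is large, and the partition $X_1,\dots,X_5$ is \emph{defined} around $Z$ (vertices that look like each $z_i$). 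The inequality translates directly into quantitative bounds on the part sizes, on $|X_0|$, and on the number of ``funky'' pairs --- strong enough to run the cleanup even though the $C_5$-density bound itself is not sharp. The partition is not extracted from a graphon-level stability theorem; it is built explicitly around one well-chosen $5$-cycle. Your suggested alternatives (feeding an inductive $C(m)$-bound back into the SDP, or working through links) are too vague to substitute for this, and the paper's introduction indicates that the usual stability route fails precisely because the substructure bounds one can read off the slack are too weak.

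\textbf{Small $k$.} Your ``handled separately (directly or by computer)'' is a real problem: the $n_0$ from the structure theorem is uncontrolled, so you may need the base case at many values of $k$, and already $k=2$ means ruling out every $25$-vertex competitor, which is hopeless by brute force. The paper avoids this entirely with a blow-up trick (Section~\ref{sec:thmC55k}): take a minimal counterexample $G$ on $5^k$ vertices for any $k\ge 2$, blow each vertex up by a factor $5^\ell$ with $5^\ell>n_0$, insert a fixed extremal graph $H$ on $5^\ell$ vertices into each part, and observe that the result is extremal on $5^{k+\ell}$ vertices. Now Theorem~\ref{thmrecurs} applies to the blow-up, and because copies of $H$ sit entirely inside single parts of the resulting $5$-partition, that partition descends to a $5$-partition of $G$ itself --- contradicting minimality. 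This reduces the base case to $k=1$ only.
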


\begin{figure}[ht]
\begin{center}
\includegraphics[scale=0.8]{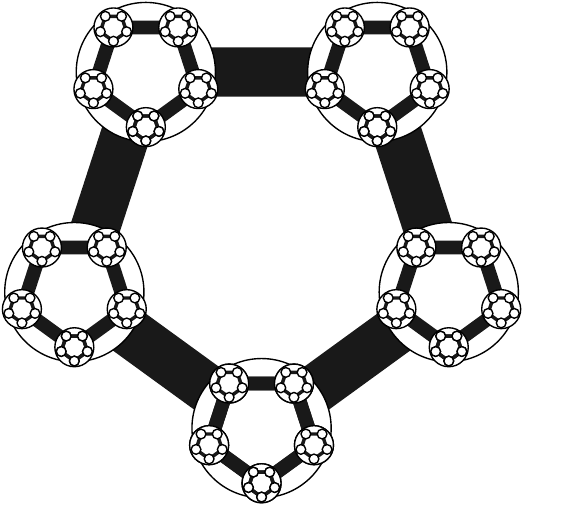}
\end{center}
\caption{The graph $C_5^{k\times}$ maximizes the number of induced $C_5$s.}\label{fig-construction}
\end{figure}

To prove Theorem~\ref{thmC55k}, we first prove the following theorem. Note that this theorem is sufficient to determine the unique limit object
(the graphon) maximizing the density of induced copies of $C_5$.
\begin{theorem}\label{thmrecurs}
There exists $n_0$ such that for every $n \geq n_0$
\[
C(n) = a\cdot b \cdot c \cdot d \cdot e + C(a) + C(b) + C(c) + C(d)+ C(e),
\]
where $a+b+c+d+e = n$ and $a,b,c,d,e$ are as equal as possible.

Moreover, if $G\in \mathcal{G}_n$ is an extremal graph, then $V(G)$ can be partitioned into five sets $X_1, X_2, X_3, X_4$ and $X_5$ of sizes $a, b, c, d$ and $e$ respectively, such that 
for 
$1\le i< j\le 5$ and $x_i\in X_i$, $x_j\in X_j$, we have $x_ix_j\in E(G)$ if and only if $j-i\in\{1,4\}$.
\end{theorem}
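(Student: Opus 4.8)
The plan is to prove Theorem~\ref{thmrecurs} by a combination of flag algebra computations and stability, in the standard two-stage format. First, the flag algebra stage: one computes an upper bound on the induced $C_5$-density in the limit, namely the density $d(C_5, W)$ over all graphons $W$. The target bound is the value attained by the iterated blow-up of $C_5$, which is the solution to $\lambda = 5! \cdot \lambda_0 (1/5)^5 = 24/125$ wait --- more precisely, the blow-up graphon is the unique solution of a fixed-point equation; if $g$ denotes the density of induced $C_5$ in the iterated blow-up, then $g = 5 \cdot (1/5)^5 \cdot 5! /5! \cdot \dots$; the exact constant is $g = \frac{24}{125} \cdot \frac{1}{1 - 1/125}$, i.e. $g = \frac{24}{124} = \frac{6}{31}$. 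The flag algebra semidefinite program, using flags on (say) $6$ or $7$ vertices, should certify $d(C_5, W) \le g$ for every graphon $W$; this is the heavy numerical step, and rounding the SDP solution to an exact rational certificate is where most of the technical care goes. I would assume this computation has been carried out (the paper advertises flag algebra computations and a supporting webpage \URL), and I would treat the resulting inequality $d(C_5, W) \le g$ as established.

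Second, the stability stage: from the flag algebra certificate one extracts not just the bound but a \emph{uniqueness/robustness} statement. Concretely, the SDP has a (nonempty) set of tight constraints, and analyzing which $6$- or $7$-vertex configurations must have density zero in any extremal graphon $W$ forces a rigid structure. The goal here is to show that any extremal graphon $W$ is, up to measure-zero modification and relabeling, the iterated blow-up of $C_5$; equivalently, that $W$ must be ``close'' to a balanced blow-up of $C_5$ into five parts, on each of which $W$ restricts to a scaled copy of the same extremal object. The key tool is a removal/stability lemma: if $d(C_5, W) \ge g - \delta$, then $W$ is $\eps(\delta)$-close in cut distance to the blow-up graphon. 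One then bootstraps: an extremal finite graph $G$ on $n$ vertices has induced $C_5$-density within $o(1)$ of $g$, hence by stability its vertex set admits a partition into five nearly-equal parts $X_1,\dots,X_5$ with almost all edges between $X_i$ and $X_{i+1}$ (indices mod $5$) present and almost all other cross-pair edges absent.

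Third, the cleaning/exact step: one shows the partition can be taken \emph{exactly} of the claimed ``blow-up of $C_5$'' form, with no exceptional edges, by a local-switching argument. Suppose some vertex $x \in X_i$ has a ``wrong'' adjacency (an edge to $X_{i+2}$ or $X_{i-2}$, or a non-edge to $X_{i\pm 1}$). Count the induced $C_5$'s through $x$ as a function of its adjacency pattern to the five parts; since the parts are large ($\Theta(n)$) and nearly blow-up-structured, the contribution is dominated by a linear functional of $x$'s neighborhoods within each part, and one checks that the unique maximizer of this functional is exactly the pattern ``complete to $X_{i-1}\cup X_{i+1}$, empty to the rest.'' Moving $x$ to its optimal position (possibly reassigning it to a different part) strictly increases $C(G)$ unless $x$ already conforms, contradicting extremality; iterating over all vertices yields the exact structure. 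Finally, with the exact structure in hand, $C(G)$ decomposes as $abcde + C(X_1) + \dots + C(X_5)$ where an induced $C_5$ either uses one vertex from each part (contributing $abcde$, since exactly the ``cyclic'' transversals form induced $C_5$'s) or lies entirely inside one part; optimizing the split of $n$ into $a+b+c+d+e$ by a convexity/monotonicity argument on the recursively-defined $C(\cdot)$ forces $a,b,c,d,e$ as equal as possible.

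The main obstacle I expect is the cleaning step, not the flag algebra bound: turning an ``almost-blow-up'' into an ``exact-blow-up'' requires that the linear functional governing $C_5$-count through a single vertex have its maximum \emph{strictly} and \emph{uniquely} at the conforming pattern, and this has to survive the $o(n)$-many exceptional edges already present in the approximate structure --- so one needs the second-order (error) terms to be genuinely negligible against the linear gain, which typically forces the stability constant $\eps$ to be chosen small enough relative to an absolute gap coming from the flag algebra dual certificate. Getting these two quantitative inputs to fit together cleanly --- the stability $\eps$ from the SDP slack, and the robustness of the single-vertex optimization --- is the delicate part; the recursion on $C(n)$ and the ``as equal as possible'' optimization at the end are then routine.
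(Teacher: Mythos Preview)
Your plan follows the ``standard two-stage format'' (sharp flag-algebra bound, then stability from tight constraints, then cleaning), but the paper explicitly reports that this route fails here: the plain flag algebra method on $7$ vertices yields only $C_5 < 0.03846157$, strictly larger than the target $1/26 \approx 0.03846154$. Since the SDP is not tight, there are no zero-density configurations to read off from the dual certificate, and the stability/uniqueness step you describe cannot get started. (Incidentally, your arithmetic for the target density is off: the fixed-point equation is $g = 5!/5^5 + g/5^4$, giving $g = 24/624 = 1/26$, not $6/31$.)

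The paper's workaround is the main idea missing from your proposal. Instead of bounding $C_5$ directly, one uses flag algebras to certify a lower bound on a carefully chosen linear combination of densities of $7$-vertex blow-ups of $C_5$, namely $4\cdot C22111 - 11.94\cdot C31111 > 0.003214$ (subject to $C_5 \ge 1/26$, which an extremal graph satisfies). Averaging this over all induced $C_5$'s in $G$ yields a single induced $C_5$ $Z=z_1z_2z_3z_4z_5$ with $C22111(Z) - 3.98\cdot C31111(Z) > 0.003979$; the parts $X_i$ are then \emph{defined} as the vertices that look like $z_i$ to the rest of $Z$. Rewritten in terms of $x_i=|X_i|/n$ and a normalized funky-pair count $f$, this inequality is a quadratic constraint strong enough to pin down $x_i\approx 0.2$, $x_0<0.0026$, and $f<0.000011$ directly---no graphon stability or removal lemma is invoked. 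From there, local switching arguments (close in spirit to your cleaning step, but now with explicit numerical margins) eliminate all funky pairs and empty $X_0$, and a final counting argument forces the parts to be balanced. The key conceptual point is that the quantity bounded by flag algebras is not the $C_5$-density itself but an auxiliary functional engineered so that (i) the SDP \emph{does} reach a usable bound for it, and (ii) its positivity on a single rooted $C_5$ already encodes the approximate $5$-partition.
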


In the next section, we give a brief overview of our method,
in Section~\ref{sec:thmrecurs} we prove Theorem~\ref{thmrecurs},
and in Section~\ref{sec:thmC55k} we prove Theorem~\ref{thmC55k}.

\section{Method and Flag Algebras}

Our method relies on the theory of flag algebras  developed by Razborov~\cite{Razborov:2007}.
Flag algebras can be used as a general tool to attack problems from extremal combinatorics.
Flag algebras were used for a wide range of problems,
for example 
the Caccetta-H\"aggkvist conjecture~\cite{HladkyKN:2009,RazborovCH:2011},
Tur\'an-type problems in graphs~\cite{DasHMNS:2012,Grzesik:2011,Hatami:2011,PikhurkoV:2013,Razborov:2008,Reiher:2012,Sperfeld:2011}, 
$3$-graphs~\cite{Falgas:2012,GlebovKV:2013} 
and hypercubes~\cite{Baber:2012,BaloghHLL:2014},
extremal problems in a colored environment~\cite{BaberT:2013,BaloghHLPVY:2014+,CummingsKPSTY:2012}, 
and also to problems in geometry~\cite{Kral:2011} or extremal theory of permutations~\cite{BaloghHLPUV:2014}.
For more details on these applications, see a recent survey of Razborov~\cite{Razborov13}.

A typical application of the so called {\em plain flag algebra method} provides a bound on densities of substructures.
In some cases the bound is sharp, which happens most often when the extremal construction is `clean', 
for example a simple blow-up of a small graph, replacing each vertex by a large independent set. 
Obtaining an exact result from the sharp bound usually consists of first
bounding the densities of some small substructures by $o(1)$,
which can be read off from the flag algebra computation.
Forbidding these structures can yield a lot of structure of the extremal structure.
Finally, stability arguments are used to extract the precise extremal structure.

Simple blow-ups of small graphs appear very often as extremal graphs, in fact there are large families of graphs whose extremal graphs
for the inducibility are of this type, see Hatami, Hirst and Norin~\cite{Hatami2014}.
However, there are also many problems where the extremal construction is an iterated blow-up as shown by Pikhurko~\cite{OlegItterative}.

For our problem, the conjectured extremal graph has an iterated structure,
for which it is rare to obtain the precise density from plain flag algebra
computations alone. One such rare example is the problem to determine the inducibility of small out-stars
in oriented graphs~\cite{Falgas:2012} (note that the problem of inducibility of all out-stars
was recently solved by Huang~\cite{Huang:2014} using different techniques).
Hladk\'y, Kr\'a\soft{l} and Norin announced that they found the inducibility
 of the oriented path of length 2, which also has an
iterated extremal construction, via a flag algebra method. 
{\ifnum\thearxiv=1 Other than these two examples and~\cite{BaloghHLPVY:2014+},}{\fi
In~\cite{BaloghHLPVY:2014+} we determine the iterated extremal construction maximizing
the number of rainbow triangles in $3$-edge-colored complete graphs.
Other than these three examples,}
we are not aware of any applications of flag algebras which completely determined an iterative structure.

For our question, a direct application of the plain method gives 
an upper bound on the limit value and shows that 
$\lim_{n\to\infty}C(n)/{n\choose 5} < 0.03846157$,
which is slightly more than the density of $C_5$ in the conjectured extremal
construction, which is $\frac{1}{26}\approx  0.03846154$.
This difference may appear very small, but the bounds on densities of subgraphs
not appearing in the extremal structure are too weak to allow the standard methods to work.

Instead, we use flag algebras to find bounds on densities of other subgraphs, which appear with fairly
high density in the extremal graph. This enables us to better control the slight lack of performance
of the flag algebra bounds as these small errors have a weaker relative effect on larger densities.

\section{Proof of Theorem~\ref{thmrecurs}}\label{sec:thmrecurs}

In our proofs we consider densities of $7$-vertex subgraphs. 
Guided by their prevalence in the conjectured extremal graph, the following two types of graphs will play an important role.
We call a graph $C22111$ if it can be obtained from $C_5$ by duplicating two vertices.
We call a graph $C31111$ if it can be obtained from $C_5$ by tripling one vertex.
The edges between the original vertices and their copies are not specified, and there are two complementary types of $C22111$, depending 
on the adjacency of the two doubled vertices in $C_5$. Technically, $C22111$ and $C31111$ denote collections of several graphs. 
Examples of $C22111$ and $C31111$ are depicted in Figure~\ref{fig-conf}.
We slightly abuse notation by using $C22111$ and $C31111$ also to denote the densities of these graphs, i.e., the probability that randomly chosen $7$ vertices induce the appropriate $7$-vertex blow-up of $C_5$.
Moreover, for a set of vertices $Z$ we denote by $C22111(Z)$ and $C31111(Z)$ the densities
of $C22111$ and $C31111$ containing $Z$, i.e., for a graph $G$ on $n$ vertices, $C22111(Z)$ ($C31111(Z)$) is the number of $C22111$($C31111$) containing $Z$ divided by  $\binom{n-|Z|}{7-|Z|}$.

\begin{figure}[ht]
\begin{center}
\includegraphics[page=1]{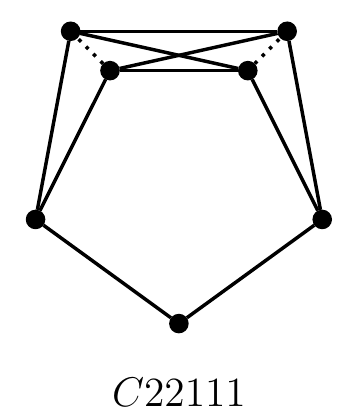}
\includegraphics[page=2]{fig-conf}
\includegraphics[page=3]{fig-conf}
\end{center}
\caption{Sketches of $C22111$ and $C31111$.}\label{fig-conf}
\end{figure}

\noindent
We start with the following statement. 

\begin{proposition} \label{prop:flag}  
There exists $n_0$ such that every extremal graph $G$ on at least  $n_0$ vertices satisfies:
\begin{align}
  C_5 &< 0.03846157;  \nonumber\\
4\cdot C22111-11.94 \cdot C31111 &\ge \frac{1349894760355389179787709186391}{420000000000000000000000000000000} + o(1)>  0.003214. \label{diff}
\end{align}
\end{proposition}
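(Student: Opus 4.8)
The plan is to obtain both inequalities as certificates coming from the plain flag algebra (semidefinite programming) method applied to the problem of maximizing the induced $C_5$ density, but run in two different ``directions''. First I would set up the standard flag algebra framework on $7$-vertex graphs: fix the relevant set of types $\sigma$ and the corresponding families of $\sigma$-flags, and recall that for any admissible choice of positive semidefinite matrices $M_\sigma$ the averaging/Cauchy–Schwarz inequalities produce a valid linear inequality on subgraph densities that holds in every graph $G$ (up to an $o(1)$ term depending only on $|V(G)|$). The first bound, $C_5 < 0.03846157$, is exactly the statement that a numerically obtained SDP solution gives an upper bound on $\lim_{n\to\infty} C(n)/\binom n5$ that is strictly below $0.03846157$; this is a routine (if computation-heavy) application of the plain method, and I would simply exhibit the rounded dual certificate (the matrices $M_\sigma$) and verify the resulting inequality holds exactly in the rationals. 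Since an extremal graph $G$ has $C_5$-density at least $1/26 < 0.03846157$ for $n$ large, in particular every extremal $G$ on $\ge n_0$ vertices satisfies the first displayed inequality.

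For the second, quantitative inequality, the key idea emphasized in the ``Method'' section is that one should not certify a bound on $C_5$ alone, but rather a bound on the \emph{linear combination} $C_5 + \lambda(4\cdot C22111 - 11.94\cdot C31111)$ for a suitable multiplier. Concretely, I would solve an SDP whose objective couples the $C_5$ density with the densities $C22111$ and $C31111$, tuned so that in the conjectured extremal graphon (the iterated blow-up of $C_5$) the combination is tight or nearly tight; the value $\frac{1349894760355389179787709186391}{420000000000000000000000000000000}$ is precisely the value of $4\cdot C22111 - 11.94\cdot C31111$ in that graphon (a finite rational computation: in $C_5^{\infty\times}$ the densities of the various $7$-vertex blow-ups of $C_5$ are explicit geometric-type sums in powers of $1/5$, $1/26$, etc.). The flag algebra certificate then yields an inequality of the form $C_5 \le 0.03846157 - \mu\big(4\cdot C22111 - 11.94\cdot C31111 - v\big) + o(1)$ with $v$ the stated rational and $\mu>0$; combined with $C_5 \ge 1/26$ for extremal $G$, this rearranges to $4\cdot C22111 - 11.94\cdot C31111 \ge v + o(1)$, and a direct numerical check gives $v > 0.003214$.

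The main obstacle, as the authors themselves flag, is \emph{sharpness and rounding}: the plain method only gives $\lim C(n)/\binom n5 < 0.03846157$, which overshoots the truth $1/26$ by about $3\times10^{-8}$, so the SDP is \emph{not} tight at the extremal construction and the usual complementary-slackness argument that reads off exact subgraph densities is unavailable. One therefore cannot hope to certify $4\cdot C22111 - 11.94\cdot C31111 \ge v$ exactly; the $o(1)$ (really, an explicit small additive slack absorbed into the strict inequality ``$> 0.003214$'', which is a hair below the exact rational value $v \approx 0.0032140$) is essential, and the coefficients $4$, $11.94$, and the multiplier must be chosen so that the known SDP slack on $C_5$ propagates to an acceptably small slack on the target combination — this is exactly the ``small errors have a weaker relative effect on larger densities'' point. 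Practically, the hard part is producing a rational PSD certificate (rounding the floating-point SDP solution to exact rationals while preserving positive semidefiniteness and the sign of the final inequality), and then verifying the resulting linear inequality among the finitely many $7$-vertex flag densities by exact arithmetic; I would delegate the verification to a computer algebra check and present the matrices (or a pointer to them, as at \URL) as the proof, noting only that the arithmetic is routine once the certificate is in hand.
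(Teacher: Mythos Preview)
Your overall plan matches the paper's: both inequalities come from the plain flag algebra SDP on $7$-vertex flags, and for the second one the paper does exactly what you describe in spirit --- it minimizes $4\cdot C22111-11.94\cdot C31111$ subject to the extremality constraint $C_5\ge \tfrac{1}{26}$ and exhibits a rounded rational certificate.

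Two concrete errors in your write-up, though. First, the displayed rational is \emph{not} the value of $4\cdot C22111-11.94\cdot C31111$ in the iterated blow-up graphon. In $C_5^{\infty\times}$ one has $C22111=\tfrac{5}{31}$ and $C31111=\tfrac{5}{93}$, so the combination equals $\tfrac{60}{93}-\tfrac{59.7}{93}=\tfrac{1}{310}\approx 0.0032258$, whereas the rational in the statement is $\approx 0.003214$. The rational is the SDP lower bound, which (consistently with the non-tightness you yourself discuss) sits strictly below the true graphon value; it is not something you compute from the construction.

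Second, and more substantively, your coupled inequality has the wrong sign. An inequality of the form
\[
C_5 \;\le\; 0.03846157 \;-\; \mu\bigl(4\cdot C22111-11.94\cdot C31111 - v\bigr)+o(1)
\]
together with $C_5\ge \tfrac{1}{26}$ yields an \emph{upper} bound on the combination, not a lower bound. To certify a lower bound you need the opposite coupling: a flag-algebra certificate of the shape
\[
\lambda\, C_5 \;-\; \bigl(4\cdot C22111-11.94\cdot C31111\bigr)\;\le\; M + o(1)
\]
for some $\lambda\ge 0$, which for extremal $G$ (where $C_5\ge \tfrac{1}{26}$) rearranges to $4\cdot C22111-11.94\cdot C31111\ge \tfrac{\lambda}{26}-M$. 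This is precisely the Lagrangian form of the paper's ``minimize the left side subject to $C_5\ge \tfrac{1}{26}$''. With that sign fix your proposal is the paper's proof.
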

\begin{proof}
This follows from a standard application of the plain flag algebra method. 
The first inequality was obtained by Flagmatic~\cite{flagmatic}, which also provides
the corresponding certificate.
For the second inequality, we minimize the left side
with the extra constraint that $C_5\ge \frac1{26}$. 
We performed the computation on $7$ vertices since the resulting bound
was sufficient and rounding the solution is easier on $7$ vertices than
on $8$. 
For certificates, see \URL.
\end{proof}

The expressions from Proposition~\ref{prop:flag} compare to the following limiting values in the iterated blow-up $C_5^{k\times}$, where $k\to\infty$:
\begin{align*}
  C_5 &= \frac{1}{26} \approx 0.03846154; &
4\cdot C22111-11.94 \cdot C31111 &=  4\cdot\frac{5}{31}-11.94 \cdot\frac{5}{93} \approx 0.0032258.
\end{align*}
Notice that in the iterated blow-up of $C_5$, in the limit $4\cdot C22111-12 \cdot C31111=0$. For our method to work,
we need a lower bound greater than zero. On the other hand, computational experiments convinced us that the method works best if the 
bound is only slightly above zero, where a suitable factor is again determined by computations. 

Let $G$ be an extremal graph on $n$ vertices, where $n$ is sufficiently large to apply Proposition~\ref{prop:flag}.
Denote the set of all induced $C_5$s in $G$ by $\mathcal{Z}$.
We assume that $a \in \mathbb{R}$ and $Z=z_1z_2z_3z_4z_5$ is an induced $C_5$ maximizing $C22111(Z) - a\cdot C31111(Z) $.
Then
\begin{align}
 & \left(C22111(Z) - a\cdot C31111(Z)\right)\binom{n-5}{2} \nonumber 
\geq \frac{1}{|\mathcal Z|} \sum_{Y \in \mathcal Z}\left(C22111(Y) - a\cdot C31111(Y)\right)\binom{n-5}{2} \nonumber\\
&=~ \frac{\left( 4\cdot C22111 - 3a\cdot C31111\right)\binom{n}{7}}{C_5\binom{n}{5}} \nonumber \
=\frac{\tfrac{4}{21}C22111 - \tfrac{a}{7}C31111}{C_5} \binom{n-5}{2}. \nonumber
\end{align}

As mentioned above, computations indicate that we get the most useful bounds if $C22111(Z) - a\cdot C31111(Z) $ is close but not too close to $0$.
Using \eqref{diff} and letting $a=3.98$, we get
\begin{align}
C22111(Z) - 3.98\cdot C31111(Z) 
> 0.003979. \label{main2}
\end{align}

For $1\le i\le 5$, we define sets of vertices $Z_i$ which look like $z_i$
to the other vertices of $Z$. Formally,
\[
Z_i:=\{v\in V(G):G[(Z\setminus z_i)\cup v] \cong C_5\}\text{ for } 1\le i\le 5.
\]
Note that $Z_i\cap Z_j=\emptyset$ for $i\ne j$. 
We call a pair $v_iv_j$ {\em funky}, if $v_iv_j$ is an edge while $z_iz_j$ is not an edge or vice versa, where $v_i\in Z_i$, $v_j\in Z_j$, $1\le i<j\le 5$.
In other words, $G[Z\cup\{v_i,v_j\}]\ncong C22111$, i.e., every funky pair destroys a potential copy of $C22111(Z)$. Denote by $E_f$ the set of funky pairs. With this notation, \eqref{main2} implies that
\[
\sum_{1\le i<j\le 5}|Z_i||Z_{j}|-|E_f|-3.98\sum_{i\in[5]}|Z_i|^2/2 >0.003979\binom{n-5}{2}.
\]
For any choice of sets $X_i\subseteq Z_i$, where $i\in[5]$, let $X_0:=V(G)\setminus\bigcup X_i$. Let $f$ be the number of funky pairs not incident to vertices in $X_0$, divided by $n^2$ for normalization, and denote $x_i =\tfrac1n|X_i|$ for $i \in \{0,\ldots,5\}$. Choose the $X_i$ (possibly $X_i=Z_i$) such that the left hand side in
\begin{align}
2\sum_{1\le i<j\le 5}x_ix_{j}-2f-3.98\sum_{i\in [5]}x_i^2 > 0.003979\label{main3}
\end{align}
is maximized. 
In order to simplify notation, we use $X_{i+5}=X_{i}$ and $x_{i+5} = x_i$ for all $i \geq 1$.

\begin{claim}
The following equations are satisfied:
\begin{align}
\Xmin< x_i &<\Xmax \quad\text{for}\quad i\in[5]; \label{xbound}\\
x_0&<\Xzeromax;\label{Xzeromax}\\
f &<  \funky.\label{funky}
\end{align}
\end{claim}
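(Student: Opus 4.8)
The plan is to derive all three inequalities directly from \eqref{main3}; no further combinatorial input is needed. Write $s:=\sum_{i\in[5]}x_i=1-x_0$ and $D:=\sum_{1\le i<j\le 5}(x_i-x_j)^2$. The identity $\sum_{i\in[5]}x_i^2=\tfrac{s^2}{5}+\tfrac{D}{5}$ together with $2\sum_{i<j}x_ix_j=s^2-\sum_{i\in[5]}x_i^2$ rewrites the left-hand side of \eqref{main3} as $0.004\,s^2-0.996\,D-2f$, so \eqref{main3} reads
\[
0.004\,(1-x_0)^2-0.996\,D-2f\ >\ \Phi_0 ,
\]
where $\Phi_0$ is the lower bound on $C22111(Z)-3.98\,C31111(Z)$ obtained from the averaging step, \eqref{diff} and $C_5<0.03846157$. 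One should keep the exact rational value from \eqref{diff} here, which gives $\Phi_0>0.0039792$; this is slightly sharper than the rounded $0.003979$ in \eqref{main2}, and in particular $\Phi_0>0.004\,(1-\Xzeromax)^2$.

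Two of the assertions are then immediate, since $D\ge 0$ and $f\ge 0$. Dropping the $D$- and $f$-terms gives $0.004\,(1-x_0)^2>\Phi_0>0.004\,(1-\Xzeromax)^2$, hence $1-x_0>1-\Xzeromax$, i.e.\ \eqref{Xzeromax}. Likewise $2f<0.004\,s^2-\Phi_0\le 0.004-\Phi_0<0.000022$, so $f<\funky$, which is \eqref{funky}.

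For \eqref{xbound} I would bound $D$ from below in terms of a single coordinate: by Cauchy--Schwarz on the four numbers $x_i-x_j$, $j\ne i$,
\[
D\ \ge\ \sum_{j\ne i}(x_i-x_j)^2\ \ge\ \tfrac14\Bigl(\sum_{j\ne i}(x_i-x_j)\Bigr)^2\ =\ \tfrac14\,(5x_i-s)^2 ,
\]
while the displayed form of \eqref{main3} gives $0.996\,D<0.004\,(1-x_0)^2-\Phi_0$; together these yield $(5x_i-s)^2<\tfrac{4}{0.996}\bigl(0.004\,(1-x_0)^2-\Phi_0\bigr)$. Now suppose $x_i\le\Xmin$. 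Since $x_0<\Xzeromax<0.0092$ we have $5x_i\le 5\cdot\Xmin<1-x_0=s$, so $s-5x_i\ge(1-x_0)-5\cdot\Xmin=0.0092-x_0>0$, whence $(0.0092-x_0)^2\le(5x_i-s)^2<\tfrac{4}{0.996}\bigl(0.004\,(1-x_0)^2-\Phi_0\bigr)$. Expanding, the left side minus the right side is a polynomial in $x_0$ with positive constant term and nonnegative coefficients of $x_0$ and $x_0^2$, hence positive for every $x_0\ge 0$ — a contradiction. Thus $x_i>\Xmin$. The upper bound $x_i<\Xmax$ is the mirror image: if $x_i\ge\Xmax$ then $5x_i\ge 5\cdot\Xmax>1\ge s$, so $5x_i-s\ge 0.0092+x_0$, and $(0.0092+x_0)^2<\tfrac{4}{0.996}\bigl(0.004\,(1-x_0)^2-\Phi_0\bigr)$ is impossible for the same reason.

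The argument is conceptually routine, so the main obstacle is numerical accuracy: the Claim's constants are tight to within a few parts in $10^{6}$ of what \eqref{main3} delivers, so one must carry the exact value of $\Phi_0$ through and verify that the two quadratics in $x_0$ genuinely have no root in $[0,\Xzeromax)$. I would also stress that the lower bound on $x_i$ cannot be read off from the smallness of $D$ alone — the bound on $x_0$ has to be substituted back into the same quadratic.
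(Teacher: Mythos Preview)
Your argument is correct and, in fact, cleaner than the paper's. The paper proves the Claim by setting up four quadratic programs (minimize/maximize $x_1$, maximize $x_0$, maximize $f$) subject to \eqref{main3} and $\sum x_i=1$, reduces each by symmetry to a two–variable problem, and then invokes Lagrange multipliers via a Sage script. You instead rewrite the left side of \eqref{main3} via the identity
\[
2\sum_{i<j}x_ix_j-3.98\sum_i x_i^2 \;=\;0.004\,s^2-0.996\,D,
\]
which immediately yields the bounds on $x_0$ and $f$ by dropping nonnegative terms, and the bounds on $x_i$ via the sharp estimate $D\ge\tfrac14(5x_i-s)^2$ (equality when the other four coordinates coincide). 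This is exactly the closed-form solution of the paper's programs: at $x_0=0$ both approaches give precisely $x_i\in(0.19816,0.20184)$, and your monotonicity-in-$x_0$ argument replaces the paper's Lagrange analysis. What you gain is a self-contained proof with no computer algebra; what the paper gains is that the numerics are hidden in a script.

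One point deserves emphasis. With the rounded constant $0.003979$ actually printed in \eqref{main3}, the bound on $x_0$ comes out as $x_0<0.00263$, not $x_0<0.0026$; your observation that one must retain the exact rational from \eqref{diff} (yielding $\Phi_0\approx 0.0039793$) is genuinely necessary here, and the paper's Sage script presumably does the same. The bounds on $f$ and on the $x_i$ already go through with $0.003979$, so this sharpening is only needed for \eqref{Xzeromax}.
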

\begin{proof}
To obtain \eqref{xbound}--\eqref{funky}, we need to solve four quadratic programs.
The objectives are to minimize $x_1$, maximize $x_1$, maximize $x_0$, and to maximize $f$, respectively.
The constraints are \eqref{main3} and $\sum_{i=0}^5 x_i = 1$ in all four cases.
By symmetry, bounds for $x_1$ apply also for $x_2$, $x_3$, $x_4$, and $x_5$.

Here we describe the process of obtaining the lower bound on $x_1$ in \eqref{xbound}. 
We need to solve the following program $(P)$:
\[
(P)  \begin{cases}
\text{minimize} & x_1\\
\text{subject to}
& \sum_{i=0}^5 x_i = 1, \\
& 2\sum_{1\le i<j\le 5}x_ix_{j}-2f-3.98\sum_{i\in [5]}x_i^2 > 0.003979,\\
& x_i \geq 0 \text{ for } i \in \{0,1,\ldots,5\}. \\
\end{cases}
\]
We claim that if $(P)$ has a feasible solution $S$,
then there exists a feasible solution $S'$ of $(P)$ where 
\begin{align*}
S'(x_1) = S(x_1),\quad S'(f) &= 0, \quad S'(x_0)=S(x_0), \\
S'(x_2) = S'(x_3) = S'(x_4)&=S'(x_5) =  \frac{1}{4}\big(1 - S(x_1)-S(x_0)\big).
\end{align*}

Since $x_2$, $x_3$, $x_4$ and $x_5$ appear only in constraints, we only need to
check if \eqref{main3} is satisfied.
The left hand side of \eqref{main3}  can be rewritten as
\begin{align*}
&2x_1\sum_{2\le i<j\le 5}x_i + 2\sum_{2\le i<j\le 5}x_ix_j-3.98\sum_{1\le i<j\le 5} x_i^2 -2f  \\
&= ~ 2x_1\sum_{2\le i<j\le 5}x_i  -\sum_{2\le i<j\le 5} (x_i-x_j)^2 - 0.98\sum_{2\le i<j\le 5} x_i^2 -3.98x_1^2-2f.
\end{align*}

Note that  the term $\sum_{2\le i<j\le 5} (x_i-x_j)^2$ is minimized if $x_i=x_j$ for
all $i,j \in \{2,3,4,5\}$. 
The term $x_2^2+x_3^2+x_4^2+x_5^2$, subject to $x_2+x_3+x_4+x_5$ being a constant, is also minimized
if $x_i=x_j$ for all $i,j \in \{2,3,4,5\}$.
Since $f \geq 0$, the term $2f$ is minimized when $f=0$.
Hence \eqref{main3} is satisfied by $S'$ and we can add the constraints $x_2=x_3=x_4=x_5$ and $f=0$ to bound $x_1$. 
The resulting program $(P')$ is
\[
(P')  \begin{cases}
\text{minimize} & x_1\\
\text{subject to}
& x_0+x_1+4y= 1, \\
& 8x_1y  - 0.98\cdot4 y^2 -3.98x_1^2 \geq 0.003979,\\
& x_0,x_1,y \geq 0. \\
\end{cases}
\]
We solve $(P')$ using Lagrange multipliers. We delegate the work to Sage~\cite{sage} and 
we provide the Sage script at \URL.
Finding an upper bound on $x_1$ is done by changing the objective to maximization.

Similarly, we can set $x_1=x_2=x_3=x_4=x_5 = 1/5$ to get an upper bound on $f$.
We can set $f=0$ and $x_1=x_2=x_3=x_4=x_5 = (1-x_0)/5$ to get an upper bound on $x_0$. 
We omit the details. Sage scripts for solving the resulting programs are provided at  \URL.
\end{proof}

For any vertex $v\in X_i,i\in[5]$ we use $d_f(v)$ to denote the number of funky pairs from $v$ to $(X_1\cup X_2\cup X_3\cup X_4\cup X_5)\setminus X_i$ after normalizing by $n$. 
If we move $v$ from $X_1$ to $X_0$, then the left hand side of \eqref{main3} will decrease by
\[
\frac{1}{n}\left(2(x_2+x_3+x_4+x_5) - 2d_f(v)-2\cdot 3.98 \cdot x_1 + o(1)\right).
\]
If this quantity was negative, then  the left hand side of \eqref{main3} could be increased by moving $v$ to $X_0$, contradicting our choice of  $X_i$.
This together with~\eqref{xbound} implies that
\begin{align}
 d_f(v) \leq x_2+x_3+x_4+x_5-3.98\cdot x_1+o(1) \le 1-4.98\cdot x_1+o(1)\le \funkyDeg.\label{maxfunky}
\end{align}
Symmetric statements hold also for  every vertex $v\in X_2 \cup X_3 \cup X_4 \cup X_5$.

\begin{claim}\label{nofunky}
There are no funky pairs.
\end{claim}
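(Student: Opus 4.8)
Suppose, for contradiction, that there is a funky pair $v_pv_q$ with $v_p\in X_p$ and $v_q\in X_q$, so that $d_f(v_p),d_f(v_q)\le\funkyDeg$, while $|X_0|\le\Xzeromax\,n$, $|X_i|\ge\Xmin\,n$ for $i\in[5]$, and there are at most $\funky\,n^2$ funky pairs inside $X_1\cup\cdots\cup X_5$. The plan is to contradict the extremality of $G$ by correcting this single pair. Let $G'$ be obtained from $G$ by switching the adjacency of $v_p$ and $v_q$ so that it agrees with that of $z_p$ and $z_q$. Then $G'$ has $n$ vertices, so $C(G')\le C(n)=C(G)$. Since switching one pair changes only those induced $C_5$'s whose vertex set contains $\{v_p,v_q\}$, it suffices to show that strictly more $5$-sets $S\supseteq\{v_p,v_q\}$ become induced $C_5$'s (the \emph{gain}) than cease to be induced $C_5$'s (the \emph{loss}).

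For the gain, write $\{r,s,t\}=[5]\setminus\{p,q\}$. For every triple $(u_r,u_s,u_t)\in X_r\times X_s\times X_t$ such that none of the nine pairs inside $\{v_p,v_q,u_r,u_s,u_t\}$ other than $v_pv_q$ is funky, the set $\{v_p,v_q,u_r,u_s,u_t\}$ induces in $G$ a copy of $C_5$ with precisely the pair $v_pv_q$ switched, hence it induces $C_5$ in $G'$ but not in $G$ and so contributes to the gain. The number of triples ruled out by some funky pair is at most $\bigl(d_f(v_p)+d_f(v_q)\bigr)n\cdot\max_i|X_i|^{2}+\funky\,n^{2}\cdot\max_i|X_i|$, so the gain is at least
\[
|X_r|\,|X_s|\,|X_t|-\bigl(2\funkyDeg\cdot\Xmax^{2}+\funky\cdot\Xmax+o(1)\bigr)n^{3}\ \ge\ \bigl(\Xmin^{3}-2\funkyDeg\cdot\Xmax^{2}-o(1)\bigr)n^{3},
\]
which exceeds $0.0066\,n^{3}$ once $n$ is large. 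For the loss, a destroyed $5$-set induces in $G$ a $C_5$ through $v_p$ and $v_q$ in the role that is ``wrong'' for $G'$: adjacent along the cycle if $z_pz_q\notin E(G)$, at distance two along the cycle if $z_pz_q\in E(G)$; in either case it has the cyclic form $v_p\,c\,v_q\,b\,a$, with $c$ a common neighbour of $v_p$ and $v_q$. Since $v_p$ and $v_q$ lie in distinct parts, $c$ is a funky neighbour of $v_p$ or of $v_q$, or lies in $X_0$, or lies in $X_p\cup X_q$; and in the latter two cases $c$ shares a part with $v_p$ or with $v_q$, hence looks the same to every outside vertex as that vertex does, which forces one of $a,b$ to be a funky neighbour of $v_p$, $v_q$ or $c$, or yields an immediate contradiction with a part‑adjacency (for instance, $a$ and $b$ would have to be adjacent while lying in non‑adjacent parts). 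Propagating these constraints, every destroyed $5$-set is anchored at a funky neighbour of one of $v_p,v_q,c$ (or at an $X_0$ vertex together with a genuine funky pair among the remaining vertices), and summing the resulting product bounds gives a loss of at most $\bigl(C_0+o(1)\bigr)n^{3}$ for an explicit absolute constant $C_0$ coming from the part sizes, bounded by $10\,\funkyDeg\cdot\Xmax^{2}<0.006$. Hence the gain strictly exceeds the loss for $n$ large, so $C(G')>C(G)$, contradicting the extremality of $G$, and the claim follows.

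The main obstacle is the loss estimate. The margin between gain and loss is only of order $10^{-3}n^{3}$, so the counting must be carried out with essentially no slack, and the branches in which two of the five vertices fall into one part must be treated carefully: it is precisely the fact that vertices sharing a part look the same to all outside vertices that turns such a branch into one carrying an additional funky pair (or into a contradiction) and keeps the loss of lower order. I expect the lengthy part to be the case analysis over the position of $\{p,q\}$ in $C_5$, the direction of the switch, and the location of each of $a,b,c$, while the final arithmetic $\Xmin^{3}>C_0$ follows at once from the tuned constants $\Xmin$, $\Xmax$, and $\funkyDeg$.
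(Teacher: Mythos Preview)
Your approach is the paper's approach: flip the funky pair and show that the gain in induced $C_5$'s exceeds the loss. Your gain estimate matches the paper's (which gets $\ge 0.0067\,n^3$). The gap is entirely in the loss estimate, where you have sketched an outline but not a proof, and the outline contains errors.

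First, the assertion ``in either case it has the cyclic form $v_p\,c\,v_q\,b\,a$ with $c$ a common neighbour'' is false in the case where the funky pair is an edge of $G$: then $v_p,v_q$ are adjacent along the destroyed $C_5$ and have no common neighbour on it. The paper sidesteps this by invoking the self-complementarity of the problem to reduce to the single case $u\in X_1$, $v\in X_2$, $uv\notin E(G)$; you should do the same.

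Second, your claim that an $X_0$ vertex in a destroyed $C_5$ must be accompanied by ``a genuine funky pair among the remaining vertices'' is false. With $u\in X_1$, $v\in X_2$, $uv\notin E(G)$, take $c\in X_1$ adjacent to $u$, $b\in X_3$, and $a\in X_0$ with $a$ adjacent to $u,b$ and non-adjacent to $v,c$: then $u\,c\,v\,b\,a$ is an induced $C_5$ in $G$ whose only funky pair is $uv$. So the $X_0$ contribution cannot be absorbed into a product with $d_f$ or $f$; the paper simply bounds it by $\tfrac12x_0n^3\approx 0.0013\,n^3$, and this term is not negligible at your margin. If you add it to your asserted $10\,d_f\,x_{\max}^2$ you get about $0.0067$, which no longer beats the gain.

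What actually makes the loss small is the clean observation the paper uses: \emph{any destroyed $C_5$ that avoids $X_0$ must contain a second funky pair besides $uv$}. Given this, the paper case-splits on that second funky pair: disjoint from $\{u,v\}$ contributes $\le f n^3$; two extra funky endpoints among $\{u,v\}$'s neighbours contributes $\le 2d_f^2 n^3$; exactly one extra funky endpoint $w$ gives, after noting that the two remaining vertices are confined to three of the five parts, a contribution $\le 2d_f\binom{3x_{\max}n}{2}=9d_f x_{\max}^2 n^3$. Summing with the crude $X_0$ bound yields loss $\le (x_0/2+f+2d_f^2+9d_f x_{\max}^2)n^3\le 0.0065\,n^3$, which is below the gain. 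Your ``propagating these constraints'' paragraph is heading in this direction, but the coefficient $10$ and the omission of $x_0/2$ are both unjustified; replacing your sketch by the paper's three-line case split gives a complete proof.
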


\begin{proof}
Assume there is a funky pair $uv$.
By symmetry, we only need to consider two cases, either $u\in X_1,v\in X_2$ or $u\in X_1, v\in X_3$. In fact, it is sufficient to
check the case where $u\in X_1$ and $v\in X_2$, so $uv$ is not an edge. The other case then follows from considering the complement of $G$. 

Let $G'$ be a graph obtained from $G$ by adding the edge $uv$, i.e., changing $uv$ to be not funky.
We compare the number of induced $C_5$s containing $\{u,v\}$  in $G$ 
and  in $G'$.
In $G'$, there are at least 
\[
\left[x_3x_4x_5-(d_f(u)+d_f(v))\max\{x_3x_4,x_3x_5,x_4x_5\}-f\cdot\max\{x_3,x_4,x_5\}\right]n^3
\]
induced $C_5$s containing $uv$, since we can pick one vertex from each of $X_3,X_4,X_5$ to form an induced $C_5$ as long as none of the resulting nine pairs is funky.

Now we count the number of induced $C_5$s in $G$ containing $\{u,v\}$. The number of such $C_5$s which contain vertices from $X_0$ is upper bounded by $x_0n^3/2$. 
Next we count the number of such $C_5$s avoiding $X_0$.  
Observe that there are no $C_5$s avoiding $X_0$ in which $uv$ is the only funky pair.

The number of $C_5$s containing another funky pair $u'v'$ with $\{u,v\}\cap\{u',v'\}=\emptyset$ can be upper bounded by $fn^3$.
We are left to count $C_5$s where the other funky pairs contain $u$ or $v$.
The number of $C_5$s containing at least two vertices other than $u$ and $v$ which
are in funky pairs can be upper bounded by $(d_f(u)^2/2+d_f(v)^2/2+d_f(u)d_f(v))n^3$.

It remains to count only $C_5$s containing exactly one vertex $w$ where
$uw$ and $vw$ are the options for funky pairs. The number of choices of $w$ is at most $(d_f(u)+d_f(v))n$.
As $\{u,v,w\}$ is in an induced $C_5$, the set $\{u,v,w\}$ induces a path in either $G$ or the complement of $G$.
Let the middle vertex of that path be in $X_i$.
If $G[\{u,v,w\}]$ is a path, then the remaining two vertices of a $C_5$ cannot be in $X_{i+1}\cup X_{i+4}$. 
If $G[\{u,v,w\}]$ is the complement of a path, then the remaining two vertices cannot be in $X_{i+2}\cup X_{i+3}$.
Hence the remaining two vertices of a $C_5$ containing $\{u,v,w\}$ can be chosen from at most $3n \max\{x_i\}$ vertices.
This gives an upper bound of $(d_f(u)+d_f(v))n\binom{3n \max\{x_i\}}{2}$ on the number of such $C_5$s.

Now we compare the number of induced $C_5$s containing $uv$ in $G$ and in $G'$ . 
We use $x_{max}$ and $x_{min}$ to denote the upper and lower bound respectively from \eqref{xbound}, use $d_f$ to denote the upper bound on $d_f(u)$ and $d_f(v)$ from \eqref{maxfunky}, and also use bounds from \eqref{Xzeromax} and \eqref{funky}.
The number of $C_5$s containing $uv$ divided by $n^3$ is
\begin{align*}
\mbox{in }G: &\le x_0/2 + f+2d_f^2+9d_fx_{max}^2 \le 0.0065;\\
\mbox{in }G':&\ge (x_{min}-2d_f)x_{min}^2-fx_{max} \ge 0.0067.
\end{align*}
This contradicts the extremality of $G$.
\end{proof}

Next, we want to show that $X_0=\emptyset$. For this, suppose that there exists an $x\in X_0$. We will add $x$ to one of the $X_i$, $i \in [5]$ such that $d_f(x)$ is minimal. 
By symmetry, we may assume that $x$ is added to $X_1$. Note that adding a single vertex to $X_1$ does not change any of the density bounds we used above by more than $o(1)$.

\begin{claim}\label{X0funky}
For every $x \in X_0$, if $x$ is added to $X_1$ then $d_f(x)\geq \XzerofunkyDeg$.
\end{claim}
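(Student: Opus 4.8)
The plan is to argue by contradiction: suppose some $x\in X_0$ satisfies $d_f(x)<\XzerofunkyDeg$ after being added to $X_1$ (with $d_f(x)$ the number of funky pairs from $x$ to $X_2\cup X_3\cup X_4\cup X_5$, normalized by $n$). Since $x$ was chosen among the five options to minimize $d_f$, the same small bound holds for adding $x$ to any $X_i$, $i\in[5]$. I would then compare $C(G)$ with $C(G')$, where $G'$ is obtained from $G$ by placing $x$ into $X_1$ and \emph{rewiring} its edges to all vertices of $\bigcup_{i\in[5]}X_i$ so that $x$ behaves exactly like a vertex of $X_1$ (i.e. $x$ is adjacent to $X_2\cup X_5$ and non-adjacent to $X_3\cup X_4$, using Claim~\ref{nofunky} that there are no funky pairs among the $X_i$); edges from $x$ to $X_0$ can be left arbitrary. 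The key point is that $x$ participates in few induced $C_5$'s in $G$ but in many in $G'$, contradicting extremality of $G$.

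The main steps are as follows. First, I would upper-bound the number of induced $C_5$'s through $x$ in $G$. Such a $C_5$ either uses another vertex of $X_0$ (at most $x_0 n^4/6$ of these, negligible by \eqref{Xzeromax}), or lies entirely in $\{x\}\cup\bigcup_{i\in[5]}X_i$. In the latter case, since the four other vertices lie in the $X_i$'s and those contain no funky pairs (Claim~\ref{nofunky}), the only way to get an induced $C_5$ through $x$ is if at least one of the edges incident to $x$ is ``funky'' relative to the $X_1$-pattern — otherwise $\{x\}\cup\{$four vertices from the $X_i\}$ would be a blow-up configuration that cannot induce $C_5$ unless the four vertices are spread as one-from-each of the remaining classes in cyclic order, and even then $x$'s connection pattern must match $z_1$. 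A careful case analysis (mirroring the one in Claim~\ref{nofunky}) shows every such $C_5$ must use one of the $\le d_f(x)\, n$ ``bad'' neighbours of $x$, giving an upper bound of roughly $d_f(x)\cdot n \cdot \binom{3 x_{\max} n}{2} + (\text{lower-order})$, which is $o(n^4)$ relative to what we need but more precisely is at most about $\XzerofunkyDeg \cdot \tfrac{9}{2} x_{\max}^2\, n^4$ plus genuinely negligible terms. Second, I would lower-bound the number of induced $C_5$'s through $x$ in $G'$: picking one vertex from each of $X_2,X_3,X_4,X_5$ with $x$ playing the role of $z_1$ yields an induced $C_5$ unless one of the six pairs among the four chosen vertices is funky — but there are none — so we get at least $x_2x_3x_4x_5\, n^4 \approx x_{\min}^4 n^4$ induced $C_5$'s through $x$. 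Third, plug in the numerical bounds from \eqref{xbound}--\eqref{funky} and \eqref{maxfunky}: the gain $x_{\min}^4 n^4$ comfortably exceeds the loss, so $C(G') > C(G)$, contradicting extremality. This forces $d_f(x)\ge \XzerofunkyDeg$.

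The delicate point — and the main obstacle — is the bookkeeping in the upper bound for $G$: I must be sure that \emph{every} induced $C_5$ through $x$ that avoids $X_0$ genuinely uses a bad (non-$X_1$-pattern) neighbour of $x$, which requires checking that no induced $C_5$ can be formed with $x$ together with four vertices from $\bigcup X_i$ all of whose pairs and whose adjacencies to $x$ respect the $C_5$-blow-up pattern; this is exactly where the structure ``$X_i$'s induce a blow-up of $C_5$ with no funky pairs'' (Claim~\ref{nofunky}) is used, and one has to handle both $G$ and its complement since $\{x,u,v\}$-type triples can induce a path or its complement. Once that structural fact is in hand, the remaining inequality is a routine substitution of the constants, with the choice of the threshold $\XzerofunkyDeg$ (and the earlier constants like $\funkyDeg$) made precisely so that the two displayed bounds are separated. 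As in the previous claims, the small quadratic/polynomial optimizations can be delegated to Sage, with scripts posted at \URL.
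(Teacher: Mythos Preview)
Your argument has a real gap in the upper bound for the number of induced $C_5$'s through $x$ in $G$. You yourself note the case where the four other vertices are spread one-per-class over $X_2,X_3,X_4,X_5$ with $x$'s connection pattern matching $z_1$, but then immediately assert that ``every such $C_5$ must use one of the $\le d_f(x)n$ bad neighbours of $x$''. That assertion is false: when $d_f(x)$ is small, almost every quadruple $(v_2,v_3,v_4,v_5)\in X_2\times X_3\times X_4\times X_5$ already gives an induced $C_5$ $xv_2v_3v_4v_5$ in $G$ (by Claim~\ref{nofunky} there are no funky pairs among the $v_i$, and by hypothesis none of the $xv_i$ is funky). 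There are roughly $x_{\min}^4 n^4$ such clean copies, essentially the same as your lower bound for $G'$. Hence $C(G')-C(G)$ is only $O(d_f(x)\,n^4)$, not of order $x_{\min}^4n^4$, and the global rewiring produces no contradiction when $d_f(x)$ is tiny --- indeed, if $d_f(x)$ were tiny then $x$ already behaves like a vertex of $X_1$, so rewiring changes almost nothing.

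The paper's remedy is to work locally rather than globally: fix a \emph{single} funky pair $xw$ (say $w\in X_2$) and flip only that edge to form $G'$. Now one compares counts of $C_5$'s through the pair $\{x,w\}$, both of order $n^3$. On the loss side, since every funky pair contains $x$, any $C_5$ through $\{x,w\}$ in $G$ that avoids $X_0\setminus\{x\}$ must contain a second funky pair $xy$, and the position of $y$ pins down the remaining two vertices; this yields at most $d_f(x)x_{\max}^2n^3$, plus the trivial $x_0n^3/2$ for copies meeting $X_0$. On the gain side, choosing one vertex from each of $X_3,X_4,X_5$ with $xv_i$ not funky gives at least $(x_{\min}-d_f(x))x_{\min}^2n^3$ new copies. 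Extremality of $G$ then forces
\[
(x_{\min}-d_f(x))\,x_{\min}^2 \ \le\ d_f(x)\,x_{\max}^2 + \tfrac{x_0}{2},
\]
and plugging in \eqref{xbound} and \eqref{Xzeromax} gives $d_f(x)\ge\XzerofunkyDeg$. The case analysis you sketch (``mirroring Claim~\ref{nofunky}'') is exactly what is needed for this single-edge flip; it simply does not transfer to a full rewiring of $x$.
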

\begin{proof}
Let $xw$ be a funky pair, where $w \in X_2$. The case where $w \in X_3$ can be argued the same way by considering the complement of $G$.
Let $G'$ be obtained from $G$ by adding the edge $xw$.
Since $G$ is extremal, 
we have $C(G')\le C(G)$.
The following analysis is similar  to the proof of Claim~\ref{nofunky}, however, we can say a bit more since every funky pair contains $x$.

First we count induced $C_5$s containing $xw$ in $G$.
The number of induced $C_5$s containing $xw$ and other vertices from $X_0$ is easily bounded from above by $x_0n^3/2$. 

Let $F$ be an induced $C_5$ in $G$ containing $xw$ and avoiding $X_0\setminus\{x\}$.
Since all funky pairs contain $x$, $F-x$ is an induced path $p_0p_1p_2p_3$ without funky pairs.
Either $p_j \in X_2$ for all $j \in \{0,1,2,3\}$ or there is an $i \in \{1,2,3,4,5\}$ 
such that $p_j \in X_{i+j}$ for all $j \in \{0,1,2,3\}$. 
The first case is depicted in Figure~\ref{fig-funky}(a). Consider now the second case.
If $i \in \{2,3,4\}$, then $xp_0p_1p_2p_3$ does not satisfy the definition of $F$.
Hence $i \in \{1,5\}$ and the possible $C_5$s are depicted in Figure~\ref{fig-funky}(b) and (c).
In each of the three cases, $F$ contains exactly two funky pairs, $xw$ and $xy$. 
The location of $y$ entirely determines the location of $F-x$. 
Hence the number of induced $C_5$s containing $xw$ is at most $d_f(x)x_{max}^2n^3$. 

In $G'$, there are at least $\big(x_3x_4x_5-d_f(x)\max\{x_3x_4,x_3x_5,x_4x_5\}\big)n^3$
 induced $C_5$s containing $xw$.
We obtain
\begin{align*}
C(G)/n^3&\le d_f(x)x_{max}^2+x_0/2 & &\text{and}  &
C(G')/n^3&\ge (x_{min}-d_f(x))x_{min}^2.
\end{align*}
Since $C(G')\le C(G)$, we have
\[
(x_{min}-d_f(x))x_{min}^2\le d_f(x)x_{max}^2+x_0/2,
\]
which together with \eqref{xbound} and \eqref{Xzeromax} gives $d_f(x)\ge \XzerofunkyDeg$.
\end{proof}

\begin{figure}
\begin{center}
\includegraphics[page=1,scale=0.8]{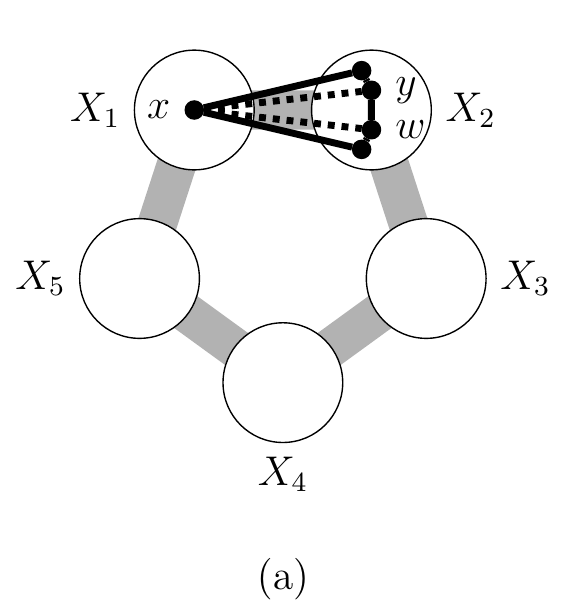}
\includegraphics[page=2,scale=0.8]{fig-funky}
\includegraphics[page=3,scale=0.8]{fig-funky}
\end{center}
\caption{Possible $C_5$s with funky pair $xw$. They all have exactly one other funky pair $xy$.\label{fig-funky}}
\end{figure}

\begin{claim}\label{uniformVertex}
Every vertex of the extremal graph $G$ is in at least $(1/26+o(1))\binom{n}{4}\approx 0.001602564 n^4$ induced $C_5$s.
\end{claim}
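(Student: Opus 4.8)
Here is the plan I would follow to prove Claim~\ref{uniformVertex}.

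The plan is to argue by deleting a vertex, which reduces the claim to a lower bound on the difference $C(n)-C(n-1)$. Indeed, if $G$ is an extremal graph on $n$ vertices and $v\in V(G)$, then the number of induced $C_5$s of $G$ containing $v$ equals $C(G)-C(G-v)=C(n)-C(G-v)\ge C(n)-C(n-1)$, because $G-v$ has only $n-1$ vertices. So it suffices to show
\[
C(n)-C(n-1)\ \ge\ \Big(\tfrac1{26}-o(1)\Big)\binom{n}{4}.
\]

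For the lower bound on $C(n)-C(n-1)$ I would use a cloning operation. Given a graph $H$ and a vertex $u$, let $H^{+}$ be obtained from $H$ by adding a new vertex $v$ whose neighbourhood is exactly $N_H(u)$, so that $v$ is non-adjacent to $u$ and $u,v$ are non-adjacent vertices of $H^{+}$ with the same neighbourhood. I claim that $C(H^{+})=C(H)+p_H(u)$, where $p_H(u)$ denotes the number of induced $C_5$s of $H$ containing $u$. Indeed, the induced $C_5$s of $H^{+}$ avoiding $v$ are precisely those of $H$; exchanging $u$ and $v$ is a bijection between the induced $C_5$s of $H^{+}$ containing $v$ but not $u$ and those containing $u$ but not $v$ (it preserves all adjacencies, since $u$ and $v$ have the same neighbourhood outside $\{u,v\}$ and are non-adjacent); and no induced $C_5$ of $H^{+}$ can contain both $u$ and $v$, since two non-adjacent vertices of a $C_5$ have exactly one common neighbour, whereas $u$ and $v$ (non-adjacent with identical neighbourhoods) would have two, namely their two neighbours in the cycle. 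Now take $H$ to be an extremal graph on $n-1$ vertices and $u$ a vertex lying in the most induced $C_5$s; since each induced $C_5$ is counted five times in $\sum_{w}p_H(w)$, we get $p_H(u)\ge 5C(n-1)/(n-1)$, hence
\[
C(n)\ \ge\ C(H^{+})\ =\ C(n-1)+p_H(u)\ \ge\ C(n-1)+\frac{5\,C(n-1)}{n-1}.
\]

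Finally I would feed in the elementary lower bound $C(m)\ge(\tfrac1{26}-o(1))\binom{m}{5}$, which follows from a deep iterated blow-up of $C_5$ with parts as balanced as possible: unrolling the recursion $c(m)\ge\prod_i|X_i|+\sum_i c(|X_i|)$ (which holds for any such construction) gives $c(m)\ge\tfrac{625}{624}\cdot\tfrac{m^{5}}{5^{5}}\big(1-o(1)\big)=\big(\tfrac1{26}-o(1)\big)\binom m5$, the floor/ceiling and bounded-depth truncation errors contributing only lower-order terms over the $O(\log m)$ levels. Applying this with $m=n-1$,
\[
\frac{5\,C(n-1)}{n-1}\ \ge\ \Big(\tfrac1{26}-o(1)\Big)\frac{5}{n-1}\binom{n-1}{5}\ =\ \Big(\tfrac1{26}-o(1)\Big)\binom{n-2}{4}\ =\ \Big(\tfrac1{26}-o(1)\Big)\binom n4,
\]
so $C(n)-C(n-1)\ge(\tfrac1{26}-o(1))\binom n4\approx 0.001602564\,n^{4}$, and combining with the first paragraph proves the claim. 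I expect the only real work to be bookkeeping: the case analysis establishing $C(H^{+})=C(H)+p_H(u)$ — in particular that no induced $C_5$ uses a vertex together with its clone — must be done carefully, and one must check that the error in the iterated-blow-up lower bound is genuinely $o(1)$; neither point is deep. Note that this argument uses only the extremality of $G$, not the structural information (no funky pairs, $X_0=\emptyset$, nearly balanced part sizes) obtained earlier; a direct count of induced $C_5$s through $v$ inside the blow-up structure actually falls short of the bound $\tfrac1{26}\binom n4$, which is why the vertex-deletion route is the natural one.
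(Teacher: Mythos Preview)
Your argument is correct, and it shares the two key ingredients with the paper's proof: a vertex-duplication (cloning) trick plus the construction lower bound $C(m)\ge(\tfrac1{26}-o(1))\binom m5$. The execution, however, is different. The paper works entirely inside the single extremal graph $G$: for any two vertices $u,v\in V(G)$, delete $v$ and duplicate $u$ to get $G'$; extremality yields
\[
0\ \ge\ C(G')-C(G)\ \ge\ C_5^u-C_5^v-C_5^{uv}\ \ge\ C_5^u-C_5^v-\binom{n-2}{3},
\]
so any two vertex-counts differ by $O(n^3)$, and since the average count is $\tfrac{5C(n)}{n}=(\tfrac1{26}+o(1))\binom n4$, the claim follows. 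Your route is more indirect: you first reduce to a lower bound on $C(n)-C(n-1)$ via vertex deletion in $G$, and then establish that bound by cloning a maximum-count vertex in a \emph{separate} extremal $(n-1)$-vertex graph. The paper's version is shorter and actually proves the slightly stronger statement that all vertex-counts are within $O(n^3)$ of one another (not merely above a threshold); your version has the minor bonus of isolating the pleasant inequality $C(n)\ge\tfrac{n+4}{n-1}\,C(n-1)$.
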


\begin{proof}

For every vertex $u\in V(G)$, denote by $C_5^u$ the number of $C_5$s in $G$ containing $u$.
For any two vertices $u,v\in V(G)$, we show that $C_5^u - C_5^v  < n^3$, which implies Claim~\ref{uniformVertex}.
Denote by $C_5^{uv}$ the number of $C_5$s in $G$ containing both $u$ and $v$.
A trivial bound is $C_5^{uv} \leq \binom{n-2}{3}$.

Let $G'$  be obtained from $G$ by deleting $v$ and 
duplicating $u$ to $u'$, i.e., for every vertex $x$ we add the edge $xu'$ iff $xu$ is an edge.
As $G$ is extremal we have
\begin{align*}
0\ge C(G')-C(G)\ge C_5^u - C_5^v  - C_5^{uv}  \ge  C_5^u - C_5^v- \binom{n-2}{3}.
\end{align*}
\end{proof}

\begin{claim}\label{X0empty}
The set $X_0$ is empty.
\end{claim}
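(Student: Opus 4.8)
The plan is to assume, for contradiction, that $X_0$ contains a vertex $x$, and to show that $x$ lies in fewer than $(1/26+o(1))\binom n4$ induced $C_5$'s, contradicting Claim~\ref{uniformVertex}. Write $a_i$ and $b_i$ for $1/n$ times the numbers of neighbors, respectively non-neighbors, of $x$ in $X_i$, so that $a_i+b_i=x_i$ for $i\in[5]$. The key input is Claim~\ref{X0funky}: since $X_1$ was chosen to minimize the funky degree of $x$ over all five placements, that claim gives $d_f(x)\ge\XzerofunkyDeg$ for \emph{every} placement of $x$ into some $X_i$. A funky pair from $x$ placed in $X_i$ is a non-edge to $X_{i+1}\cup X_{i+4}$ or an edge to $X_{i+2}\cup X_{i+3}$, so this reads
\[
b_{i+1}+b_{i+4}+a_{i+2}+a_{i+3}\ \ge\ \XzerofunkyDeg-o(1)\qquad\text{for every }i\in[5].
\]

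Next I would count the induced $C_5$'s through $x$, using Claim~\ref{nofunky}: with no funky pairs, $G$ restricted to $X_1\cup\cdots\cup X_5$ is exactly a blow-up of $C_5$ in which each part carries an arbitrary graph. Classify a $C_5$ through $x$ by the location of its other four vertices $v_1v_2v_3v_4$, which induce a $P_4$: (a) at least one of them lies in $X_0$; (b) all four lie in a single part $X_i$; (c) they lie one in each of four cyclically consecutive parts $X_i,X_{i+1},X_{i+2},X_{i+3}$. That these are exhaustive follows from a short look at the possible part patterns: writing the cyclic differences of the parts of consecutive vertices as $d_1,d_2,d_3\in\{-1,0,1\}$, the three non-edges $v_1v_3,v_2v_4,v_1v_4$ force $d_1+d_2,\ d_2+d_3,\ d_1+d_2+d_3\in\{0,\pm2\}\pmod 5$, which leaves only $d_1=d_2=d_3=0$ (case (b)) or $d_1=d_2=d_3\in\{\pm1\}$ (case (c)). Now estimate: case (a) contributes at most $\tfrac{x_0}{6}n^4\le\tfrac{\Xzeromax}{6}n^4$; in case (b) the $P_4$ splits into two neighbors and two non-neighbors of $x$ inside $X_i$, giving at most $\tfrac12\sum_i a_i^2b_i^2\,n^4$; and in case (c) every choice of one vertex from each of the four parts with the correct adjacency to $x$ already yields an induced $C_5$, so this contributes exactly $\sum_{i\in[5]}a_i b_{i+1}b_{i+2}a_{i+3}\,n^4$. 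Hence
\[
C_5^x\ \le\ \Bigl(\tfrac{x_0}{6}+\tfrac12\sum_{i=1}^5 a_i^2b_i^2+\sum_{i=1}^5 a_i b_{i+1}b_{i+2}a_{i+3}\Bigr)n^4 .
\]

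Finally I would maximize the bracketed expression over the feasible region given by $a_i,b_i\ge 0$, $\Xmin\le a_i+b_i\le\Xmax$, $x_0\le\Xzeromax$, $\sum_{i=0}^5 x_i=1$, and the five funky-degree inequalities above. This is a small polynomial program, to be solved exactly as the quadratic programs in the earlier claims (a Sage script at \URL), and I expect it to return a value strictly below $\tfrac1{26}\cdot\tfrac1{24}=\tfrac1{624}\approx0.001602564$. Combined with the displayed bound this yields $C_5^x<(1/26+o(1))\binom n4$ for $n$ large, contradicting Claim~\ref{uniformVertex}; hence $X_0=\emptyset$. The crux — and the reason the constant $\XzerofunkyDeg$ in Claim~\ref{X0funky} is tuned where it is — is precisely this optimization: the funky-degree inequalities have to cut the transversal sum $\sum a_i b_{i+1}b_{i+2}a_{i+3}$ down from its unconstrained maximum of about $5\cdot0.2^4$ to well under $0.0011$, yet feasible configurations can still push it close to that value (for instance by concentrating one term), so the margin against $1/624$ is genuinely small and the estimate must be carried out carefully.
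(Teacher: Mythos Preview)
Your outline follows the paper's proof almost exactly: set up $a_i,b_i$, extract the five funky-degree inequalities from Claim~\ref{X0funky}, bound $C_5^x$ by the transversal sum $\sum_i a_ib_{i+1}b_{i+2}a_{i+3}$ plus an ``all-in-one-part'' term, and then optimize against Claim~\ref{uniformVertex}. Your case analysis (the $d_1,d_2,d_3$ argument) is a cleaner justification of the decomposition than the paper gives.

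Two bookkeeping differences are worth noting. First, the ``all-in-one-part'' coefficient should be $\tfrac14$, not $\tfrac12$: the four vertices form an unordered pair of neighbors and an unordered pair of non-neighbors, so the count is $\binom{a_in}{2}\binom{b_in}{2}\sim\tfrac14a_i^2b_i^2n^4$. Second, the paper absorbs the $X_0$-contribution by inflating every $a_i,b_i$ by $a_0,b_0$ and then relaxing to $a_i+b_i=0.21$, rather than adding your separate $x_0/6$ term.

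These differences matter numerically. The final program is \emph{quartic}, so the paper does not solve it ``exactly as the quadratic programs in the earlier claims'' via Lagrange multipliers; instead it runs a brute-force mesh over a $101^5$ grid together with a gradient bound, obtaining $<0.00158$. That is already uncomfortably close to the target $1/624\approx 0.001603$. Your additive $x_0/6\le 0.000433$ alone would push the bound over, and the extra $\tfrac14\sum a_i^2b_i^2$ from your $\tfrac12$ adds up to another $0.00013$. So the proof as you have written it does not close: you would need to adopt the tighter $\tfrac14$, optimize over the smaller domain $a_i+b_i\le\Xmax$ (which genuinely lowers the mesh maximum), and verify the resulting number --- which is precisely what the paper's relaxation-and-mesh procedure does. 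Your instinct in the last paragraph that ``the margin is genuinely small'' is exactly right; the slack is not large enough to survive the looser constants.
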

\begin{proof}
Assume that there is an $x\in X_0$. We count $C_5^x$, the number  of induced $C_5$s containing $x$. Our goal is to show that $C_5^x$ is smaller than the value in
Claim~\ref{uniformVertex}, which is a contradiction.
Let $a_i n$ be the number of neighbors of $x$ in $X_i$ and $b_i n$ be the number of non-neighbors of $x$ in $X_i$ for $i \in \{0,1,2,3,4,5\}$. 

The number of $C_5$s where the other four vertices are in $X_1\cup X_2\cup X_3\cup X_4\cup X_5$
is upper bounded by
\[
\left(a_1b_2b_3a_4 + a_2b_3b_4a_5 + a_3b_4b_5a_1+a_4b_5b_1a_2+a_5b_1b_2a_3+\frac{1}{4}\sum_{i=1}^5a_i^2b_i^2\right)n^4.
\]
Moreover, we also need to include the $C_5$s containing vertices from $X_0$ in our bound,
which we do very generously by increasing all variables by $a_0$ or $b_0$.

Since $x_i = a_i+b_i$, we can use \eqref{xbound} for every $i \in [5]$ as constraints.
We also use Claim~\ref{X0funky} to obtain constraints since
it is possible to express $d_f(x)$ using $a_i$s and $b_i$s if $x$ is added to $X_j$ for all $i,j \in [5]$.

By combining the previous objective and constraints, we obtain the following program $(P)$, whose objective gives an upper bound on the number
of $C_5$s containing $x$ divided by $n^4$.
\[
(P)  \begin{cases}
\text{maximize}  
& \sum_{i=1}^5(a_i+a_0)(b_{i+1}+b_0)(b_{i+2}+b_0)(a_{i+3}+a_0)+\frac{1}{4}\sum_{i=1}^5a_i^2b_i^2\\
\text{subject to}
& \sum_{i=0}^5 (a_i+b_i)= 1, \\
& \Xmin\le a_i+b_i\le\Xmax  \text{ for } i \in \{1,2,3,4,5\}, \\
& a_0+b_0 \le \Xzeromax, \\
&    b_2+b_5+a_3+a_4\ge \XzerofunkyDeg,\\
 &   b_1+b_3+a_4+a_5\ge \XzerofunkyDeg,\\
  &  b_2+b_4+a_1+a_5 \ge \XzerofunkyDeg,\\
  &  b_3+b_5+a_1+a_2 \ge \XzerofunkyDeg,\\
  &  b_4+b_1+a_2+a_3 \ge \XzerofunkyDeg,\\
& a_i,b_i\geq 0   \text{ for } i \in \{0,1,2,3,4,5\}. \\
\end{cases}
\]

\newcommand{\XmaxRelax}{0.21}
\newcommand{\XzerofunkyDegRelax}{0.081}

Instead of solving $(P)$ we solve a slight relaxation $(P')$ with increased upper bounds on $a_i+b_i$,
which allows us to drop $a_0$ and $b_0$. Since the objective function is maximizing, we can claim
that $a_i+b_i$ is always as large as possible, which decreases the degrees of freedom.
\[
(P')  \begin{cases}
\text{maximize}  
& f=\sum_{i=1}^5a_ib_{i+1}b_{i+2}a_{i+3}+\frac{1}{4}\sum_{i=1}^5a_i^2b_i^2\\
\text{subject to}
&    a_i+b_i = \XmaxRelax  \text{ for } i \in \{1,2,3,4,5\}, \\
&    b_2+b_5+a_3+a_4\ge \XzerofunkyDeg,\\
 &   b_1+b_3+a_4+a_5\ge \XzerofunkyDeg,\\
  &  b_2+b_4+a_1+a_5 \ge \XzerofunkyDeg,\\
  &  b_3+b_5+a_1+a_2 \ge \XzerofunkyDeg,\\
  &  b_4+b_1+a_2+a_3 \ge \XzerofunkyDeg,\\
& a_i,b_i\geq 0   \text{ for } i \in \{1,2,3,4,5\}. \\
\end{cases}
\]
Note that the resulting program $(P')$ has only 5 degrees of freedom. 
We find an upper bound of the solution of $(P')$ by a brute force method. We discretize
the space of possible solutions, and bound the gradient of the target function to control the
behavior between the grid points. 

For solving $(P')$, we fix a constant $s$ which will correspond to the number of steps. For every
$a_i$ we check $s+1$ equally spaced values between $0$ and $\XmaxRelax$ that include
the boundaries.
By this we have a grid of $s^5$ boxes where every feasible solution of $(P')$, and hence also of $(P)$, is in one of the boxes.

Next we need to find the partial derivatives of $f$. Since $f$ is symmetric, we only check the partial
derivative with respect to $a_1$.
\[
\frac{\partial f}{\partial a_1} = b_{2}b_{3}a_{4} + a_{3}b_{4}b_{5} + \frac{1}{2}a_1b_1^2
\]
We want to find an upper bound on $\frac{\partial f}{\partial a_1}$. 
We can pick $0.21$ as an upper bound
on $a_i+b_i$. 
Hence we assume $a_1+b_1=a_3+b_3=a_4+b_4=b_{2}=b_5=0.21$ and we maximize 
\[
b_2b_{3}a_{4}+a_{3}b_{4}b_5= 0.21\left((0.21-a_3)a_4  + a_3(0.21-a_4)\right) = 0.21\left(0.21a_4  + 0.21a_3 -2a_3a_4\right).
\]
This is maximized if $a_3=0, a_4=0.21$ or $a_3=0.21, a_4=0$ and gives the value $0.21^3$.
Hence 
\[
\frac{1}{2}a_1b_1^2 =\frac42 a_1\cdot \frac{b_1}{2}\cdot \frac{b_1}{2}\le \frac{2( a_1+b_1)^3}{3^3} = \frac{2\cdot 0.21^3}{27}.
\]
The resulting upper bound is
\[
\frac{\partial f}{\partial a_1} \leq 0.21^3 + \frac{2\cdot 0.21^3}{27} < 0.001.
\]
Hence in a box with side length $t$ the value of $f$ cannot be bigger than the
value at a corner plus $5t/2\cdot0.001$. The factor  $5t/2$ comes from
the fact that the closest corner is in distance at most $t/2$ in each of the $5$ coordinates. 

If we set $s = 100$, we compute that the maximum over all grid points of $(P'')$ is less than  0.00157.
This can be checked by a computer program \texttt{mesh-opt.cpp} which computes the values at all grid points. 
With $t<0.21/s=0.0021$, we have $5t/2\cdot0.001<0.00001 $.
We conclude that $x$ is in less than $0.00158n^4$ induced $C_5$s which contradicts Claim~\ref{uniformVertex}.

Let us note that if we choose $s = 200$, we could conclude that $x$ is less than $0.00147 n^4$.
\end{proof}

We have just established the ``outside'' structure of $G$.
Observe that in this outside structure, an induced $C_5$ can appear only if it either intersects each of the classes 
in exactly one vertex, or if it lies completely inside one of the classes.
This implies that
\[C(n)=(x_1\cdot x_2\cdot x_3 \cdot x_4 \cdot x_5)n^5 + C(x_1n)+C(x_2n)+C(x_3n)+C(x_4n)+C(x_5n).\]
By averaging over all subgraphs of $G$ of order $n-1$, we can easily see that $C(n)\le C(n-1)$ for all $n$, so
\[
\ell:=\lim_{n\to\infty}\frac{C(n)}{{n\choose 5}}
\]
exists. Therefore, 
\[\ell+o(1)=5!\cdot x_1\cdot x_2\cdot x_3 \cdot x_4 \cdot x_5  + \ell (x_1^5+x_2^5+x_3^5+x_4^5+x_5^5),\]
which implies that $x_i=\frac15+o(1)$, and $\ell=\frac1{26}$, given the constraints on the $x_i$.

In order to prove Theorem~\ref{main2}, it remains  to show that in fact $|X_i|-|X_j|\le 1$ for all $i,j \in \{1,\ldots,5\}$.

\newcommand{\avg}{\mathrm{avg}}
\begin{claim}\label{clbalance}
For $n$ large enough, we have $|X_i|-|X_j|\le 1$ for all $i,j \in \{1,\ldots,5\}$.
\end{claim}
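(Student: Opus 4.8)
Write $n_i:=|X_i|$. The plan is a smoothing (exchange) argument on the five part sizes, run inside a strong induction on $n$. The starting point is already available: $V(G)=X_1\cup\cdots\cup X_5$ with the clean $C_5$-blow-up adjacency between parts, each $G[X_i]$ is extremal on $n_i$ vertices, and $C(n)=n_1n_2n_3n_4n_5+\sum_{i=1}^5 C(n_i)$; moreover, by \eqref{xbound}, $\Xmin\,n<n_i<\Xmax\,n$ for every $i$. Conversely, for any positive integers $m_1,\dots,m_5$ with $\sum_i m_i=n$, the $C_5$-blow-up with part sizes $m_i$ in which part $i$ carries an extremal graph on $m_i$ vertices has exactly $\prod_i m_i+\sum_i C(m_i)$ induced $C_5$'s (any induced $C_5$ either lies inside one part or meets each part in exactly one vertex). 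Hence $C(n)=\max\bigl\{\prod_i m_i+\sum_i C(m_i):m_i\ge 1,\ \sum_i m_i=n\bigr\}$, and this maximum is attained at $(n_1,\dots,n_5)$.

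Now suppose, for contradiction, that two parts differ by at least two, say $n_1\ge n_5+2$, and compare $(n_1,n_2,n_3,n_4,n_5)$ with $(n_1-1,n_2,n_3,n_4,n_5+1)$. The product term increases by $(n_1-n_5-1)\,n_2n_3n_4\ge n_2n_3n_4>(\Xmin\,n)^3$, while $\sum_i C(m_i)$ changes by $[C(n_5+1)-C(n_5)]-[C(n_1)-C(n_1-1)]$. So it suffices to prove
\[
[C(n_1)-C(n_1-1)]-[C(n_5+1)-C(n_5)]<(n_1-n_5-1)\,n_2n_3n_4 ,
\]
because then the modified tuple gives a strictly larger value of $\prod_i m_i+\sum_i C(m_i)$, contradicting maximality, and therefore all $n_i$ must lie within $1$ of one another.

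The delicate point is that each of $C(n_1)-C(n_1-1)$ and $C(n_5+1)-C(n_5)$ is of order $n^4$, so only their \emph{difference} has any chance of being bounded by the right-hand side; and since $n_1,n_5$ are both $n/5+o(n)$, that difference turns out to be only of order $n^3$, with a small explicit constant. To make this rigorous I would use the inductive hypothesis: for every $m<n$ with $m\ge n_0$ the recursion $C(m)=\prod(\text{balanced }5\text{-partition of }m)+\sum C(\text{those parts})$ holds. Writing $D(m):=C(m)-C(m-1)$ and subtracting this recursion at $m$ and $m-1$ gives $D(m)=P(m)+D(\lceil m/5\rceil)$ for $m>n_0$, where $0\le P(m)\le(\lceil m/5\rceil)^4$ and $P(m)=(m/5)^4+O(m^3)$. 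Unfolding this for $m=n_1$ and for $m=n_5+1$ and subtracting, one checks that the two arguments start a bounded distance apart and this distance only shrinks under division by $5$, so at every level one compares $P$ at two nearby values; each level contributes a geometrically decaying amount, and the total is at most $C'n^3$ for an explicit $C'$ far below $\Xmin^3$. The finitely many bottom levels, where the argument has dropped below $n_0$ and the recursion no longer applies, contribute only an $O(1)$ term, negligible once $n$ is large. This yields the displayed inequality for all large $n$ and closes the induction.

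The main obstacle is exactly this last estimate. Any soft information about $C(m)-C(m-1)$ is useless, since when the two partitions are compared not only the $n^4$ but even the $n^5$ contributions cancel, so one must control the remaining error down to order $n^3$ and verify the accumulated constant stays below the threshold $\Xmin^3$ forced by \eqref{xbound}; this is what compels us to exploit the exact self-similar recursion for $C$ (legitimate by induction) rather than its asymptotics, and to absorb the small-$n$ base cases — where the recursion unwinds past $n_0$ — by choosing $n_0$ and the ``$n$ large enough'' threshold in the statement appropriately.
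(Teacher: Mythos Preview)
Your overall strategy---a smoothing/exchange argument on the part sizes---is the same as the paper's, but your execution has a real gap, and the paper's route is considerably shorter.

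The gap is the sentence ``the two arguments start a bounded distance apart.'' At this point the only information you have about $n_1-n_5$ is \eqref{xbound}, which gives $|n_1-n_5|<(\Xmax-\Xmin)n$, i.e.\ a quantity linear in $n$; boundedness of $n_1-n_5$ is precisely what you are trying to prove. As a result your conclusion that $D(n_1)-D(n_5+1)\le C'n^3$ for some absolute $C'$ cannot be right: the difference is of order $(n_1-n_5)\,n^3$, not $n^3$, and correspondingly the target $(n_1-n_5-1)n_2n_3n_4$ on the right is also of order $(n_1-n_5)\,n^3$, not $\Xmin^3 n^3$. The idea is salvageable---if you carry the factor $n_1-n_5-1$ through both sides, the leading constants reduce to comparing $4/(5^4-1)$ with $1$---but you have not done this, and the $O(m^3)$ errors in $P(m)=(m/5)^4+O(m^3)$ then need honest bookkeeping rather than a one-line ``geometrically decaying'' remark.

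The paper avoids all of this. Immediately before the claim it establishes $\ell:=\lim_{n\to\infty}C(n)/\binom{n}{5}=\tfrac{1}{26}$ and notes that $C(n)/\binom{n}{5}$ is monotone, so $C(y_i)=\bigl(\tfrac{1}{26}+o(1)\bigr)\binom{y_i}{5}$ for each part size $y_i=|X_i|$. The proof of the claim is then a direct vertex replacement: pick $v\in X_1$ minimizing $C_5^v$ and $w\in X_2$ maximizing $C_5^w$, use extremality to get $C_5^v+C_5^{vw}-C_5^w\ge 0$, and bound each term using the structure and the asymptotic for $C(y_i)$; a short explicit computation gives a negative quantity when $y_1-y_2\ge 2$. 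No induction, no unfolding of a recursion, no tracking of constants through levels---just the already-known value $\tfrac{1}{26}$ plugged into a single inequality. You should use that asymptotic directly rather than trying to reconstruct it inductively.
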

\begin{proof}
By symmetry, assume for contradiction that $|X_1|-|X_2|\ge 2$.
Let $v \in X_1$ where $C_5^v$ is minimized  over the vertices in $X_1$
and let $w \in X_2$ where $C_5^w$ is maximized  over the vertices in $X_2$.
As $G$ is extremal, $C_5^v+C_5^{vw}-C_5^w\ge 0$; otherwise, we can increase the number of $C_5$s 
by replacing $v$ by a copy of $w$.

Let $y_i:=|X_i|=x_in$. By the monotonicity of $\frac{C(n)}{n^5}$, we have
\[
\frac1{26}+o(1)\ge \frac{C(y_2)}{{y_2\choose 5}}\ge \frac{C(y_1)}{{y_1\choose 5}}\ge \frac1{26} -o(1).
\]
Therefore, using that $y_1-y_2\ge 2$,
\begin{align*}
C_5^v+C_5^{vw}-C_5^w&\le \frac{C(y_1)}{y_1}+y_2y_3y_4y_5+y_3y_4y_5-\frac{C(y_2)}{y_2}-y_1y_3y_4y_5\\
&= \frac{y_2C(y_1)-y_1C(y_2)}{y_1y_2}+(y_2-y_1+1)y_3y_4y_5\\
&\le \left( \frac1{26}+o(1)\right)\frac1{y_1y_2} \left( y_2{y_1\choose 5}-y_1{y_2\choose 5}\right)+(y_2-y_1+1)y_3y_4y_5\\
&\le \left( \frac1{26\cdot 5!}+o(1)\right) \left( y_1^4-y_2^4\right)+(y_2-y_1+1)y_3y_4y_5\\
&= \left( \frac1{26\cdot 5!}+o(1)\right) (y_1-y_2)\left( y_1^3+y_1^2y_2+y_1y_2^2+y_2^3\right)+(y_2-y_1+1)y_3y_4y_5\\
&= (y_1-y_2)\left(\left( \frac1{26\cdot 5!}+o(1)\right)  \frac{4n^3}{125}  - \frac{n^3}{125} \right)+\frac{(1+o(1))n^3}{125}\\
&\le \left( \frac2{26\cdot 5!}+o(1)\right)  \frac{4n^3}{125}  - \frac{(1+o(1))n^3}{125} 
<0,
\end{align*}
a contradiction.
\end{proof}

With this claim, the proof of Theorem~\ref{thmrecurs} is complete.

\section{Proof of Theorem~\ref{thmC55k}}\label{sec:thmC55k}

Theorem~\ref{thmC55k} is a consequence of Theorem~\ref{thmrecurs}.
The main proof idea is to take a minimal counterexample $G$ and show that some blow-up of $G$  contradicts Theorem~\ref{thmrecurs}.

\begin{proof}[Proof of Theorem~\ref{thmC55k}.]
Theorem~\ref{thmC55k} is easily seen to be true for $k=1$.
Suppose for a contradiction that there is a graph $G$ on $n=5^k$ vertices with $C(G)\ge C(C_5^{k\times})$ that
is not isomorphic to $C_5^{k\times}$, where $k\ge 2$ is minimal.
Let $n_0$ be the $n_0$ from the statement of Theorem~\ref{thmrecurs}.

We say that a graph $F$ of size $5m$ can be \emph{5-partitioned}, if $V(F)$ can be partitioned
into five sets  $X_1,X_2,X_3,X_4,X_5$ with $|X_i|=m$ for all $i \in [5]$ and for every 
$1 \leq i < j \leq 5$, every $x_i \in X_i$ and $x_j \in X_j$ are adjacent
if and only if $|i-j| \in \{1,4\}$. Notice that this is the structure described by Theorem~\ref{thmrecurs}. Hence if  $5m \geq n_0$, and $F$ is extremal then $F$ can be $5$-partitioned.

If $G$ can be $5$-partitioned, then $G$ is isomorphic to $C_5^{k\times}$ by the minimality of $k$, a contradiction.
Therefore, $G$ cannot be $5$-partitioned.

Let $H$ be an extremal graph on $5^\ell>n_0$ vertices.
Blowing up every vertex of $C_5^{k\times}$ by a factor of $5^\ell$, and inserting $H$ in every part, gives an extremal graph $G_1$
on $5^{k+\ell}$ vertices by $\ell$ applications of Theorem~\ref{thmrecurs}. On the other hand, the graph $G_2$ obtained by blowing up every vertex of $G$ by a factor of $5^\ell$, 
and inserting $H$ in every part, contains at least as many $C_5$s as $G_1$,
\begin{align*}
C(G_1) &= 5^k\cdot C(H) + C(C_5^{k\times}) \cdot (5^\ell)^5, &
C(G_2) &= 5^k\cdot C(H) + C(G) \cdot (5^\ell)^5,
\end{align*}
so $C(G_1) \leq C(G_2)$. Hence $G_2$ must also be extremal.
Therefore $G_2$ can be $5$-partitioned into five sets $X_1,X_2,X_3,X_4,X_5$ with $|X_i|=5^{k+\ell-1}$. In particular, two vertices in $G_2$
are in the same set $X_i$ if and only if their adjacency pattern agrees on more than half of the remaining vertices. But this implies that for every copy $H'$ of $H$ inserted into the blow up of $G$, all vertices of $H'$ are in the same $X_i$, and thus  the $5$-partition of $V(G_2)$ gives a $5$-partition of $V(G)$, a contradiction.
\end{proof}

\section*{Acknowledgement}
We would like to thank Jan Volec for fruitful discussions.
%{\ifnum\thearxiv=1 Thank you for reading all the way to the end. We hope you liked it. Please send us some candy, beer or wine.}{\fi}

\bibliographystyle{abbrv}
\bibliography{refs.bib}

\end{document}